\newtheorem{theorem}{Theorem}[section]
\newtheorem{lemma}[theorem]{Lemma}
\newtheorem{corollary}[theorem]{Corollary}
\theoremstyle{definition}
\newtheorem{defn}[theorem]{Definition}
\newtheorem*{qu*}{Question}
\theoremstyle{remark}
\newtheorem{remark}[theorem]{Remark}
\newcommand\N{\mathbb{N}}
\newcommand\Z{\mathbb{Z}}
\newcommand\E{\operatorname{\mathbb{E}}}
\newcommand\cA{\mathcal{A}}
\newcommand\cH{\mathcal{H}}
\renewcommand\Pr{\operatorname{\mathbb{P}}}
\newcommand\id{\hbox{$1\mkern-6.5mu1$}}
\newcommand\eps{\varepsilon}
\renewcommand\le{\leqslant}
\renewcommand\ge{\geqslant}
\renewcommand\to{\rightarrow}
\newcommand\ds{\displaystyle}
\begin{document}

\title{Erd\H{o}s covering systems}
\author{Paul Balister \and B\'ela Bollob\'as \and Robert Morris \and \\
Julian Sahasrabudhe \and Marius Tiba}

%\address{Department of Mathematical Sciences, University of Memphis, Memphis, TN 38152, USA}\email{pbalistr@memphis.edu}

\address{Mathematical Institute, University of Oxford, Radcliffe Observatory Quarter, Woodstock Road, Oxford, OX2 6GG, UK}\email{Paul.Balister|marius.tiba@maths.ox.ac.uk}

\address{Department of Pure Mathematics and Mathematical Statistics, Wilberforce Road, Cambridge, CB3 0WA, UK, and Department of Mathematical Sciences,
University of Memphis, Memphis, TN 38152, USA}\email{bb12@cam.ac.uk}

\address{IMPA, Estrada Dona Castorina 110, Jardim Bot\^anico,
Rio de Janeiro, 22460-320, Brazil}\email{rob@impa.br}

\address{Department of Pure Mathematics and Mathematical Statistics, Wilberforce Road,
Cambridge, CB3 0WA, UK}\email{jdrs2@cam.ac.uk}

\thanks{The first two authors were partially supported by NSF grant DMS 11855745.}
%, the third author was partially supported by CNPq (Proc.~303275/2013-8) and FAPERJ (Proc.~201.598/2014), and the fifth author was supported by a Trinity Hall Research Studentship.}

%\date{\today}

\begin{abstract}
A \emph{covering system} is a finite collection of arithmetic progressions whose union is the set of integers. The study of these objects was initiated by Erd\H{o}s in 1950, and over the following decades he asked many questions about them. Most famously, he asked whether there exist covering systems with distinct moduli whose minimum modulus is arbitrarily large. This problem was resolved in 2015 by Hough, who showed that in any such system the minimum modulus is at most $10^{16}$.

The purpose of this note is to give a gentle exposition of a simpler and stronger variant of Hough's method, which was recently used to answer several other questions about covering systems. We hope that this technique, which we call the \emph{distortion method}, will have many further applications in other combinatorial settings.
\end{abstract}

\maketitle

\section{Introduction}

We say that a finite collection $\{A_1,\ldots,A_k\}$ of arithmetic progressions is a \emph{covering system} if $\bigcup_{i = 1}^k A_i = \Z$, that is, if their union covers the integers. The study of covering systems with distinct moduli (common differences) was initiated almost 70 years ago by Erd\H{o}s~\cite{E50}, who posed many problems about these systems over the following decades. The most famous of these was his so-called `minimum modulus problem', which asked whether there exist such systems with arbitrarily large minimum modulus. This problem was resolved by Hough~\cite{H} in 2015, following important work by Filaseta, Ford, Konyagin, Pomerance and Yu~\cite{FFKPY}.

\begin{theorem}[Hough, 2015]\label{thm:Hough}
In any covering system of the integers with distinct moduli, the minimum modulus is at most $10^{16}$.
\end{theorem}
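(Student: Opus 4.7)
I would argue by contradiction: suppose $\{a_i + n_i\Z\}_{i=1}^k$ is a covering system with distinct moduli $n_1 < \cdots < n_k$ and $n_1 > 10^{16}$. The plan is to construct a probability measure $\mu$ on $\Z/Q\Z$ (where $Q$ is a suitable multiple of $\lcm(n_1,\dots,n_k)$) such that the total $\mu$-mass of $\bigcup_i (a_i + n_i\Z)$ is strictly less than $1$, contradicting the covering hypothesis. Since $\sum_i 1/n_i$ is generally $\geq 1$ for a covering system, the uniform measure is useless; $\mu$ must be carefully chosen so as to \emph{distort} mass away from those integers that are already covered by progressions of small modulus, making room for a quantitative surplus.

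\textbf{Setup and the distortion measure.} First I would invoke the theorem of Filaseta, Ford, Konyagin, Pomerance and Yu to reduce to the case where every modulus $n_i$ is $y$-smooth for some fixed $y$; then the moduli are built from a finite set of small primes $p_1 < \cdots < p_r$. Let $p_j^{e_j}$ be the largest power of $p_j$ dividing any $n_i$, set $Q = \prod_j p_j^{e_j}$, and identify $\Z/Q\Z \cong \prod_j \Z/p_j^{e_j}\Z$ via the Chinese remainder theorem. On each factor I would place a tilted probability distribution that concentrates mass on ``heavy'' residues, and take $\mu$ to be the product of these. The tilts are tuned so that each heavy residue class modulo $p_j^{e_j}$ receives mass roughly $\gamma_j / p_j$ for some $\gamma_j < 1$, rather than the uniform value $1/p_j^{e_j}$, so that APs with large modulus are made artificially ``light''.

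\textbf{Processing primes and the key estimate.} I would then process the arithmetic progressions in stages indexed by primes: at stage $j$, handle those APs $a + n\Z$ for which $p_j^{e_j} \| n$, updating $\mu$ at each stage by conditioning on the complement of the previously covered set. The main inductive step would establish that the $\mu$-mass removed at stage $j$ is bounded by $\sum_{n \,:\, p_j^{e_j} \| n} C/n$ times a controlled distortion factor depending on the smaller primes. Summing over $j$, using $n_i \geq n_1 > 10^{16}$, standard estimates on smooth-number sums, and an optimized choice of tilts $\gamma_j$, I would show that the total removed mass is strictly less than $1$, producing the contradiction.

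\textbf{Main obstacle.} The delicate point is controlling how much the conditional measure drifts away from its ``ideal'' product form as successive primes are processed: after each stage the product structure is broken by conditioning, and one must compare the conditional measure to the ideal distorted measure via sharp pointwise inequalities. Ensuring that the multiplicative error terms compound to at most a bounded constant --- which is what allows for a \emph{universal} bound like $10^{16}$ rather than one that deteriorates with $r$ --- requires a very careful setup of the distortion parameters and an inductive invariant that survives the conditioning steps. This, I expect, is the main technical hurdle in executing the method.
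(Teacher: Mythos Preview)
First, note that the paper does not itself prove Theorem~\ref{thm:Hough}; it proves only the square-free case (Corollary~\ref{cor:MM}) via the geometric Theorem~\ref{thm:GMM}, and refers to~\cite{BBMST1} for the general result. Still, comparing your sketch to the distortion method actually presented, there are two genuine gaps. The first is the claimed reduction, via~\cite{FFKPY}, to $y$-smooth moduli: that paper proves no such reduction, and none is needed here --- one simply processes all primes dividing the moduli, in increasing order. More importantly, your measure is not the right object. A tilted \emph{product} measure on $\prod_j \Z/p_j^{e_j}\Z$ cannot be set up in advance, because which residues deserve extra mass depends on the specific progressions $a_i+n_i\Z$, not on the primes alone; and ``updating by conditioning on the complement of the previously covered set'' is exactly what fails, since if at some stage nearly an entire fibre is covered, the conditional blow-up factor is enormous and these factors compound across stages. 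You correctly flag this compounding as the main obstacle, but your sketch contains no mechanism to control it.

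The paper's method resolves both issues at once. The measures $\Pr_k$ are built \emph{adaptively} from the progressions revealed so far, and the distortion at each step is \emph{capped}: if more than a $\delta$-fraction of a fibre is covered, one rescales the survivors by only $1/(1-\delta)$ rather than by the full conditional factor (Definition~\ref{def:Pk}). The price is that $B_k$ no longer receives zero measure, but the residual satisfies $\Pr_k(B_k)\le \E_{k-1}[\alpha_k^2]/(4\delta(1-\delta))$ via the elementary inequality $\max\{\alpha-\delta,0\}\le\alpha^2/(4\delta)$ (Lemma~\ref{thm:general}). This converts what would be a divergent first-moment sum of the type $\sum C/n$ --- your proposed ``key estimate'', which cannot close since $\sum_i 1/n_i\ge 1$ for a covering system --- into a convergent \emph{second}-moment sum, bounded by counting pairs of progressions (Lemma~\ref{prop:moments}). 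The capping-to-second-moment step is the heart of the argument and is absent from your proposal.
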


\enlargethispage*{\baselineskip}
\thispagestyle{empty}

Hough's paper moreover introduced a new method, which we call the \emph{distortion method}. In this method, we reveal the progressions in stages, and define a sequence of probability measures, each of which depends only on the progressions revealed up to that point. These measures concentrate on the set of uncovered points, and allow us to maintain a constant lower bound on the measure of this set (which may be very small in the uniform measure).

The purpose of this note is to give a gentle introduction to a simpler and more powerful variant of Hough's method, which was introduced by the authors in two recent papers~\cite{BBMST1,BBMST2}. We will illustrate this method by giving a simple proof of Hough's theorem in the case of square-free moduli.\footnote{We emphasize that this proof can be extended to prove Theorem~\ref{thm:Hough} without much difficulty (see~\cite{BBMST1}), but this requires some tedious calculations that, for the sake of clarity, we wish to avoid.} Our aim is to make this method more widely known amongst the combinatorial community, in the hope that further applications will be discovered.

\section{A geometric setting}

For the purposes of exposition, it will be convenient to work in a (slightly more general) geometric setting. Let $S_1,\ldots,S_n$ be finite sets with at least two elements, and set
$$Q = S_1 \times \cdots \times S_n.$$
A {\em hyperplane} in $Q$ is a set $A = Y_1 \times \cdots \times Y_n \subset Q$, with each $Y_i$ either equal to $S_i$ or a singleton element of $S_i$, and the set of \emph{fixed coordinates} of $A$ is
$$F(A) := \big\{ k : Y_k \ne S_k \big\}.$$
We say that two hyperplanes $A$ and $A'$ are \emph{parallel\/} if $F(A) = F(A')$.

The following theorem was proved in~\cite{BBMST2}; we will give the proof in Sections~\ref{sec:distortion}--\ref{sec:proofGMM}, below.

\begin{theorem}\label{thm:GMM}
For every sequence of finite sets\/ $(S_k)_{k \ge 1}$ such that $|S_k| \ge 2$ for each $k \in \N$ and
\begin{equation}\label{eq:GMMcondition}
\liminf_{k \to \infty} \frac{|S_k|}{k} > 3,
\end{equation}
there exists a constant $C$ such that the following holds. Let\/ $\cA$ be a collection of hyperplanes that cover $Q := S_1 \times \dots\times S_n$ for some $n \in \N$. Then either two of the hyperplanes are parallel, or there exists a hyperplane\/ $A \in \cA$ with $F(A) \subset \{1,\ldots,C\}$.
\end{theorem}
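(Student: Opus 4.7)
The plan is to argue by contradiction via the distortion method. Suppose $\mathcal{A}$ covers $Q$, no two hyperplanes of $\mathcal{A}$ are parallel, and every $A\in\mathcal{A}$ has $\max F(A) > C$ for some large constant $C$ to be chosen. Group the hyperplanes as $\mathcal{A}_k := \{A\in\mathcal{A} : \max F(A) = k\}$ and process them in order of increasing $k$. Starting from $\mu_0$ uniform on $Q$, I will inductively define a sequence of product probability measures $\mu_k = \nu_1^{(k)}\times\cdots\times \nu_n^{(k)}$: at stage $k$ only the $k$-th marginal is updated, replacing $\nu_k^{(k-1)}$ (still uniform on $S_k$, since coordinate $k$ has not yet been touched) by a \emph{distortion} $\tilde\nu_k$. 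The no-parallel hypothesis guarantees that the sets $\{F(A):A\in\mathcal{A}_k\}$ are distinct subsets of $\{1,\ldots,k\}$ each containing $k$, which is the structural ingredient that lets us control $|\mathcal{A}_k|$ and related weighted sums.

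Let $U_k\subseteq Q$ denote the set of points not covered by $\mathcal{A}_1\cup\cdots\cup\mathcal{A}_k$. The plan is to establish an inequality of the form
\[
\mu_k(U_k) \ge (1-\delta_k)\,\mu_{k-1}(U_{k-1})
\]
with $\sum_{k=C+1}^{n}\delta_k < 1$. Under the contradiction hypothesis $\mathcal{A}_k=\varnothing$ for $k\le C$, so those stages are trivial and $\mu_C(U_C)=1$; telescoping then gives $\mu_n(U_n)>0$, contradicting $U_n=\varnothing$. The distortion $\tilde\nu_k$ is engineered to reduce the $\mu_k$-mass of $\bigcup_{A\in\mathcal{A}_k}A$ by steering weight away from values $s\in S_k$ where the \emph{hazard}
\[
w_k(s) := \sum_{\substack{A\in\mathcal{A}_k \\ s_k(A)=s}}\;\prod_{j\in F(A)\setminus\{k\}} \nu_j^{(k-1)}(s_j(A))
\]
is large, while respecting a pointwise bound $\tilde\nu_k(s)\le D/|S_k|$ for a constant $D$. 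Maintained inductively as the invariant $\nu_j^{(k)}(s)\le D/|S_j|$ for all $j$ and $k$, this cap lets the same hazard estimates recur at future stages.

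The main obstacle is the quantitative control of the per-stage loss $\delta_k$. A naive accounting --- just bounding each $\tilde\nu_k(s)$ uniformly by $D/|S_k|$ and summing $\mu_{k-1}(A)$ over $A\in\mathcal{A}_k$ --- falls short; the distortion method gets a crucial extra factor by choosing $\tilde\nu_k$ so as to approximately optimize against the hazard $w_k(s)$ rather than acting uniformly, cancelling mass between values of $S_k$ with large and small hazard. It is precisely the threshold $\liminf|S_k|/k > 3$ that turns the resulting per-stage estimate into a summable tail $\sum_{k>C}\delta_k<1$, once $C$ is chosen large enough that $|S_k|/k > 3$ for all $k>C$ (and $D$ is fixed accordingly). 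With this summability secured, telescoping delivers $\mu_n(U_n)>0$ and the desired contradiction.
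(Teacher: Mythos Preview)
Your outline has a genuine structural gap: restricting to \emph{product} measures $\mu_k=\nu_1^{(k)}\times\cdots\times\nu_n^{(k)}$, with only the marginal on $S_k$ updated at stage $k$, cannot deliver the needed savings. The ``crucial extra factor'' you appeal to, obtained by optimizing $\tilde\nu_k$ against the hazard $w_k(s)$, vanishes in the adversarial case where $w_k$ is (approximately) constant across $s\in S_k$; nothing in the no-parallel hypothesis prevents the fixed values $s_k(A)$ from being distributed so as to arrange this. In that case every admissible $\tilde\nu_k$ gives $\sum_s\tilde\nu_k(s)w_k(s)=w_k$, and you are left with the first-moment bound $\delta_k\lesssim |S_k|^{-1}\prod_{j<k}(1+D/|S_j|)$. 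Under $|S_j|\ge(3+\eps)j$ and any cap $D\ge 1$ this behaves like $k^{-1+D/(3+\eps)}$, which is never summable. So the hazard-optimization step, as stated, does not close the argument, and the proposal gives no alternative mechanism for producing the required decay.

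The paper's distortion is essentially different: the measure $\Pr_k$ on $Q_k=Q_{k-1}\times S_k$ is defined \emph{fibre by fibre}, so that for each $x\in Q_{k-1}$ the conditional law on $\{x\}\times S_k$ depends on $\alpha_k(x):=|\{y:(x,y)\in B_k\}|/|S_k|$. When $\alpha_k(x)\le\delta$ the covered points of that fibre receive zero mass; when $\alpha_k(x)>\delta$ the reweighting is capped by $1/(1-\delta)$. This non-product construction yields $\Pr_k(B_k)\le\E_{k-1}[\alpha_k(x)^2]/(4\delta(1-\delta))$ via the pointwise inequality $\max\{a-\delta,0\}\le a^2/4\delta$, and then a union bound (rather than a telescoping product) gives $\Pr_n(Q\setminus\bigcup_k B_k)\ge 1-\sum_k\Pr_k(B_k)$. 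The second moment is controlled by $|S_k|^{-2}\sum_{F_1,F_2\subset[k-1]}\nu(F_1\cup F_2)$, and the constant $3$ emerges here, as the number of ways a coordinate can lie in $F_1\cup F_2$: one gets $\sum_J 3^{|J|}\nu(J)=\prod_{j<k}\bigl(1+3/((1-\delta)|S_j|)\bigr)$, which under $|S_j|\ge(3+\eps)j$ and $\delta=\eps/6$ grows like $k^{1-\eps'}$, making $\sum_k\Pr_k(B_k)$ convergent. Your product-measure framework has no analogue of this fibrewise second-moment step, and that is the missing idea.
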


Before continuing, let us note that Theorem~\ref{thm:GMM} implies Hough's theorem for covering systems with square-free moduli.

\begin{corollary}\label{cor:MM}
In any covering system of the integers with distinct square-free moduli, the minimum modulus is bounded by an absolute constant.
\end{corollary}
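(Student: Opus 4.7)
The plan is to recast the covering system in the hyperplane language of Theorem~\ref{thm:GMM} with the specific choice $S_k = \Z/p_k\Z$, where $p_1 < p_2 < \cdots$ is the increasing sequence of primes. First I would verify hypothesis~\eqref{eq:GMMcondition} for this sequence: since $p_k \sim k\log k$ by the prime number theorem (much weaker estimates would do), one has $\liminf_{k\to\infty} p_k/k = +\infty > 3$. Theorem~\ref{thm:GMM} therefore supplies an absolute constant $C$ attached to this fixed choice of $(S_k)$.

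Next, given a covering system with distinct square-free moduli, only finitely many primes occur in any modulus, so I may pick $n$ large enough that every such prime lies in $\{p_1,\ldots,p_n\}$. By the Chinese Remainder Theorem, the coordinatewise reduction map identifies $\Z/(p_1\cdots p_n)\Z$ with $Q := S_1\times\cdots\times S_n$. Under this identification, an arithmetic progression with square-free modulus $d = p_{i_1}\cdots p_{i_s}$ pulls back to a hyperplane in $Q$ whose set of fixed coordinates is exactly $F(A)=\{i_1,\ldots,i_s\}$, the fixed values being the residues of the progression modulo each $p_{i_j}$. Since the original progressions cover $\Z$, the resulting family $\cA$ of hyperplanes covers $Q$.

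The crucial point, and the place where square-freeness is essential, is that two distinct square-free integers have distinct sets of prime divisors, so distinct moduli correspond to hyperplanes with distinct sets of fixed coordinates, i.e.\ to non-parallel hyperplanes in $\cA$. Theorem~\ref{thm:GMM} then produces some $A \in \cA$ with $F(A) \subset \{1,\ldots,C\}$; the corresponding modulus divides $p_1\cdots p_C$ and is therefore at most this absolute constant. There is no substantive obstacle beyond the set-up of the dictionary; the only thing that genuinely needs emphasis is why square-freeness is exactly the hypothesis that makes the translation faithful, since without it moduli such as $6$ and $12$ would yield parallel hyperplanes and the distinct-moduli assumption would not survive the passage to the geometric setting.
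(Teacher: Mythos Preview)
Your proposal is correct and follows essentially the same route as the paper: choose $S_k$ of size $p_k$, verify the growth condition~\eqref{eq:GMMcondition} via $p_k \sim k\log k$, translate progressions with square-free moduli into hyperplanes in $Q$ via the Chinese Remainder Theorem, observe that distinct square-free moduli yield non-parallel hyperplanes, and apply Theorem~\ref{thm:GMM} to bound the minimum modulus by $p_1\cdots p_C$. Your additional remarks on why square-freeness is exactly what is needed (and the $6$ versus $12$ example) are helpful elaboration but do not change the argument.
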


\begin{proof}
Simply apply Theorem~\ref{thm:GMM} with $S_k = \{1,\ldots, p_k\}$ for each $k \in \N$, where $p_1 < p_2 < \cdots$ are the prime numbers, listed in increasing order. To spell out the details, let $\cA$ be a covering system of the integers with distinct square-free moduli, let $p_n$ be the largest prime that divides one of the moduli, and set $Q := S_1 \times \cdots \times S_n$. Now, by the Chinese Remainder Theorem, each arithmetic progression $A = a + d\Z \in \cA$ corresponds to the hyperplane $Y_1 \times \cdots \times Y_n \subset Q$, where $Y_k = \{ a \pmod {p_k} \}$ if $p_k$ divides $d$, and $Y_k = S_k$ otherwise. We may therefore map $\cA$ into a finite collection $\cH$ of hyperplanes that covers $Q$, and since the moduli of $\cA$ are distinct, the hyperplanes in $\cH$ are non-parallel.

Now, by Theorem~\ref{thm:GMM}, there exists an arithmetic progression $A = a + d\Z \in \cA$ such that the set of fixed coordinates of the corresponding hyperplane is contained in $\{1,\ldots,C\}$ (where $C$ is the constant given by the theorem). But this means that $d$ divides (and hence at most) $p_1 \cdots p_C$, which is an absolute constant, as required.
\end{proof}

Note that $p_k \sim k \log k$, whereas in Theorem~\ref{thm:GMM} we allow the size of the sets $S_k$ to grow only linearly. We showed in~\cite{BBMST2} that Theorem~\ref{thm:GMM} is close to best possible, since there exists a sequence with $|S_k| \sim k$ for which the conclusion of the theorem fails.

In the next section we will give an overview of the distortion method, and prove a general lemma regarding covering. In Section~\ref{sec:moments} we will perform a simple moment calculation, and in Section~\ref{sec:proofGMM} we will deduce Theorem~\ref{thm:GMM}.

\section{The Distortion Method}\label{sec:distortion}

In this section we will give an outline of the proof of Theorem~\ref{thm:GMM}. We will work in the following general setting: let $S_1,\ldots,S_n$ be finite sets with at least two elements, set
$$Q := S_1 \times \cdots \times S_n,$$
and let $\cA$ be a collection of hyperplanes in $Q$. Our task is to show that if $|S_k|$ grows sufficiently quickly, and $\cA$ does not contain parallel hyperplanes, then $\cA$ cannot cover $Q$.

To do so, we will reveal the elements of $\cA$ in $n$ rounds, corresponding to the $n$ sets $S_1,\ldots,S_n$, and define a sequence of probability measures $\Pr_0, \ldots, \Pr_n$ on $Q$ that gradually distort the space. The measure $\Pr_k$ will depend on the elements of $\cA$ that were revealed in the first $k$ rounds, and will be chosen so that the $\Pr_k$-measure of the set covered in the $k$th round is small. However, it will be important that we do not change the measure of the set of points that were covered earlier, and we do not increase the measure of any set too much.

In order to define these measures, recall that $F(A)$ is the set of fixed coordinates of a hyperplane $A$, and define
$$\cA_k := \big\{ A \in \cA : \max(F(A)) = k \big\}$$
to be the set of hyperplanes that we reveal in round $k$, and
\begin{equation*}\label{def:B_i}
B_k := \bigcup_{A \in \cA_k} A
\end{equation*}
to be the set that is covered by those hyperplanes. Note that, since $F(A) \subset [k] = \{1,\ldots,k\}$ for every $A \in \cA_k$, we can consider $B_k$ to be a subset of
$$Q_k := S_1 \times \cdots \times S_k$$
by identifying $X \subset Q_k$ with $X \times S_{k+1} \times \dots \times S_n$. We call a set of this form $Q_k$-{\em measurable}.

Let $\Pr_0$ be the uniform probability measure on $Q$, and let us think of this as being the trivial measure on $Q_0$, the empty product. Let $1 \le k \le n$, and suppose that we have already defined a probability measure $\Pr_{k-1}$ on $Q_{k-1}$ (which we extend uniformly to a measure on~$Q$). A natural way (cf.~\cite{H}) to define the measure $\Pr_k$ on $Q_k$ would be to set $\Pr_k(B_k) = 0$, and redistribute the removed measure over the remaining elements (taking care not to change the measure of any $Q_{k-1}$-measurable set). However, it turns out to be helpful to define the measure  $\Pr_k$ in the following, slightly more subtle way.

Recall that $Q_k = Q_{k-1} \times S_k$, so the elements of $Q_k$ can be written as pairs $(x,y)$, where $x \in Q_{k-1}$ and $y \in S_k$. Now, for each $x \in Q_{k-1}$, define
\begin{equation}\label{def:alpha}
 \alpha_k(x) := \frac{\big| \big\{ y \in S_k : (x,y)\in B_k \big\} \big|}{|S_k|}, %= \frac{\Pr_{k-1}\big( F_x \cap B_k \big)}{\Pr_{k-1}(x)}
\end{equation}
that is, the proportion of the `fibre' $F_x := \{ (x,y) : y \in S_k \} \subset Q_k$ that is covered in round~$k$. Now, for some $\delta \in [0,1/2]$, we do one of two things on the fibre $F_x$, depending on whether or not $\alpha_k(x) \le \delta$:

\pagebreak

\begin{itemize}
\item If $\alpha_k(x) \le \delta$, then we set $\Pr_k(x,y) = 0$ for every element of $F_x \cap B_k$, and increase the measure proportionally on the rest of $F_x$;
\item If $\alpha_k(x) > \delta$, then we `cap' the distortion by increasing the measure at each point of $F_x \setminus B_k$ by a factor of $1 / (1-\delta)$, and decreasing the measure on points of $F_x \cap B_k$ by a corresponding factor.
\end{itemize}
To be precise, the probability measure $\Pr_k$ is defined as follows.

\begin{defn}\label{def:Pk}
For each $(x,y) \in Q_k$, define
\begin{equation*}\label{def:P_k}
 \Pr_k(x,y) :=
 \begin{cases}
   \max\bigg\{ 0, \, \ds\frac{\alpha_k(x)-\delta}{\alpha_k(x)(1-\delta)} \bigg\} \cdot \frac{\Pr_{k-1}(x)}{|S_k|},
   & \text{ if }(x,y)\in B_k;\\[3ex]
   \min\bigg\{ \ds\frac{1}{1-\alpha_k(x)}, \, \frac{1}{1-\delta} \bigg\} \cdot \frac{\Pr_{k-1}(x)}{|S_k|},
   & \text{ if }(x,y)\notin B_k.
 \end{cases}
\end{equation*}
\end{defn}

Note that $\sum_{y \in S_k} \Pr_k(x,y) = \Pr_{k-1}(x)$ for every $x \in Q_{k-1}$, and hence $\Pr_k(X) = \Pr_{k-1}(X)$ for any $Q_{k-1}$-measurable set $X$. We can now easily prove the following key lemma, which (despite its simplicity) is the main step in the proof of Theorem~\ref{thm:GMM}.

%Write $\E_k$ to denote expectation with respect to the measure $\Pr_k$.

\begin{lemma}\label{thm:general}
Let\/ $\cA$ be a collection of hyperplanes in $Q = S_1 \times \dots\times S_n$. If
 \begin{equation}\label{eq:eta}
 \frac{1}{4\delta(1-\delta)} \sum_{k = 1}^n \E_{k-1}\big[\alpha_k(x)^2\big] < 1,
 \end{equation}
 then\/ $\cA$ does not cover\/~$Q$.
\end{lemma}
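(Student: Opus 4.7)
The plan is to bound $\Pr_n\!\left(\bigcup_{k=1}^n B_k\right)$ above by the quantity on the left-hand side of \eqref{eq:eta}, so that the hypothesis forces this measure to be strictly less than $1$; since $\Pr_n$ is a probability measure on $Q$, the complement must then be non-empty, and any point of it is uncovered by~$\cA$.

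First I would record two structural properties of Definition~\ref{def:Pk}. The formulas for the two cases are arranged so that on every fibre $F_x$ the total mass equals $\Pr_{k-1}(x)$; summing over $x \in Q_{k-1}$ shows that $\Pr_k$ is indeed a probability measure, and moreover that $\Pr_k(X) = \Pr_{k-1}(X)$ whenever $X$ is $Q_{k-1}$-measurable. Iterating, $\Pr_j(X) = \Pr_k(X)$ for every $j \geq k$ and every $Q_k$-measurable set $X$. Since $B_k \subset Q_k$ is $Q_k$-measurable, this yields $\Pr_n(B_k) = \Pr_k(B_k)$ for each $k$.

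Next I would compute $\Pr_k(B_k)$ by summing over fibres. For each $x \in Q_{k-1}$, the fibre $F_x$ contains exactly $\alpha_k(x)|S_k|$ points of $B_k$, and on them the first case of Definition~\ref{def:Pk} applies. A direct calculation collapses the $|S_k|$ factors and gives
\[
\Pr_k(B_k) = \sum_{x \in Q_{k-1}} \Pr_{k-1}(x) \cdot \max\!\left\{0,\, \frac{\alpha_k(x) - \delta}{1-\delta}\right\} = \E_{k-1}\!\left[\max\!\left\{0,\, \frac{\alpha_k(x)-\delta}{1-\delta}\right\}\right].
\]
The central inequality I would then invoke is the pointwise bound
\[
\max\!\left\{0,\, \frac{\alpha - \delta}{1-\delta}\right\} \le \frac{\alpha^2}{4\delta(1-\delta)} \qquad \text{for all } \alpha \in [0,1],
\]
which follows from $(\alpha - 2\delta)^2 \geq 0$ (trivial when $\alpha \le \delta$, and a rearrangement of $\alpha^2 - 4\delta\alpha + 4\delta^2 \geq 0$ when $\alpha > \delta$). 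Taking expectations gives $\Pr_k(B_k) \le \E_{k-1}[\alpha_k(x)^2]/(4\delta(1-\delta))$.

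Finally, by the union bound and the identity $\Pr_n(B_k) = \Pr_k(B_k)$ from the first step,
\[
\Pr_n\!\left(\bigcup_{k=1}^n B_k\right) \le \sum_{k=1}^n \Pr_k(B_k) \le \frac{1}{4\delta(1-\delta)} \sum_{k=1}^n \E_{k-1}[\alpha_k(x)^2] < 1,
\]
so $Q \setminus \bigcup_k B_k$ has positive $\Pr_n$-measure and in particular is non-empty. The only non-routine step is spotting the quadratic bound $(\alpha-2\delta)^2 \geq 0$; everything else is a bookkeeping check that the piecewise definition really does produce a probability measure with the correct marginals on $Q_{k-1}$.
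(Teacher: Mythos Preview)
Your proof is correct and follows essentially the same route as the paper's: compute $\Pr_k(B_k)$ fibre by fibre to get $\E_{k-1}\big[\max\{0,(\alpha_k(x)-\delta)/(1-\delta)\}\big]$, apply the pointwise inequality coming from $(\alpha-2\delta)^2\ge 0$, and then combine $\Pr_n(B_k)=\Pr_k(B_k)$ with a union bound. The paper's version is organised identically, so there is nothing to add.
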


\begin{proof}
Recall from~\eqref{def:alpha} that $|F_x \cap B_k| = \alpha_k(x) \cdot |S_k|$. By Definition~\ref{def:Pk}, it follows that
\begin{align*}
\Pr_k(B_k) & = \sum_{x \in Q_{k-1}} | F_x \cap B_k| \cdot \max\bigg\{ 0, \, \frac{\alpha_k(x) - \delta}{\alpha_k(x)( 1 - \delta )} \bigg\} \cdot \frac{\Pr_{k-1}(x)}{|S_k|} \nonumber\\
& = \frac{1}{1-\delta} \sum_{x \in Q_{k-1}} \max\big\{ 0, \, \alpha_k(x)-\delta \big\} \cdot \Pr_{k-1}(x) \nonumber\\
& \le \frac{1}{1-\delta}\sum_{x \in Q_{k-1}} \frac{\alpha_k(x)^2}{4\delta} \cdot \Pr_{k-1}(x)
\, = \, \frac{\E_{k-1}\big[ \alpha_k(x)^2 \big]}{4\delta(1-\delta)}. \label{eq:bound:on:Bk}
\end{align*}
Indeed, $\max\{a-b,0\} \le a^2 / 4b$ follows from $(a-2b)^2 \ge 0$, and holds for all $a,b > 0$.

Now, since $\Pr_n(B_k) = \Pr_k(B_k)$ for every $1 \le k \le n$, it follows that the set $R \subset Q$ of points not covered by $\cA$ satisfies
$$\Pr_n(R) \ge 1 - \sum_{k = 1}^n \Pr_n(B_k) \ge 1 - \frac{1}{4\delta(1-\delta)} \sum_{k = 1}^n \E_{k-1}\big[\alpha_k(x)^2\big] > 0,$$
by~\eqref{eq:eta}, and hence $\cA$ does not cover $Q$, as claimed.
\end{proof}

\section{Bounding the moments of $\alpha_k(x)$}\label{sec:moments}

In order to use Lemma~\ref{thm:general} to prove Theorem~\ref{thm:GMM}, we need to bound, for each $1 \le k \le n$, the second moment of~$\alpha_k(x)$ with respect to the measure $\Pr_{k-1}$. The following lemma provides the bound we need.

\begin{lemma}\label{prop:moments}
Let\/ $\cA$ be a collection of hyperplanes in $Q = S_1 \times \dots\times S_n$, no two of which are parallel. Then, for each\/ $1 \le k \le n$,
\begin{equation}\label{eq:M:bounds}
\E_{k-1}\big[\alpha_k(x)^2\big] \le \frac{1}{|S_k|^2} \prod_{j = 1}^{k-1} \bigg(1 + \frac{3}{(1-\delta)|S_j|} \bigg). 
\end{equation}
\end{lemma}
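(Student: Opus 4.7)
The plan is to linearise $\alpha_k(x)^2$ by a union bound over pairs of hyperplanes in $\cA_k$, and then reduce the resulting expression to a purely combinatorial sum over subsets of $[k-1]$. For $A \in \cA_k$, since $k \in F(A)$, we may write $A = A' \times \{v(A)\} \times S_{k+1} \times \cdots \times S_n$ for some $A' \subset Q_{k-1}$ and $v(A) \in S_k$. Then $(x,y) \in B_k$ forces $y = v(A)$ and $x \in A'$ for some $A \in \cA_k$, so $|\{y : (x,y) \in B_k\}| \le \sum_{A \in \cA_k} \id[x \in A']$. Squaring and integrating against $\Pr_{k-1}$ yields
\begin{equation*}
\E_{k-1}\!\big[\alpha_k(x)^2\big] \;\le\; \frac{1}{|S_k|^2}\sum_{A_1, A_2 \in \cA_k} \Pr_{k-1}\!\big(A_1' \cap A_2'\big).
\end{equation*}

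Next, I would establish the auxiliary estimate that for any set $X \subset Q$ whose set of fixed coordinates is $T \subset [k-1]$,
\begin{equation*}
\Pr_{k-1}(X) \;\le\; \prod_{j \in T} \frac{1}{(1-\delta)|S_j|}.
\end{equation*}
This is proved by induction on $j$ directly from Definition~\ref{def:Pk}: when $j \in T$, $X$ lies in a single fibre over coordinate $j$ and the pointwise bound $\Pr_j(x,y) \le \Pr_{j-1}(x)/((1-\delta)|S_j|)$ picks up the factor $1/((1-\delta)|S_j|)$, while when $j \notin T$ the total mass on each fibre is preserved ($\sum_{y \in S_j}\Pr_j(x,y) = \Pr_{j-1}(x)$) and no factor appears. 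Applied to $A_1' \cap A_2'$ (which, if non-empty, has fixed coordinate set $T_1 \cup T_2$, where $T_i := F(A_i) \cap [k-1]$), this gives the pointwise bound $\Pr_{k-1}(A_1' \cap A_2') \le \prod_{j \in T_1 \cup T_2} 1/((1-\delta)|S_j|)$.

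To finish, I would invoke the non-parallel hypothesis. Since every $A \in \cA_k$ has $\max F(A) = k$, the condition $F(A_1) \ne F(A_2)$ is equivalent to $T_{A_1} \ne T_{A_2}$, so $A \mapsto T_A$ is injective from $\cA_k$ into $2^{[k-1]}$. This converts the sum over pairs in $\cA_k \times \cA_k$ into an upper bound by the unrestricted sum over $2^{[k-1]} \times 2^{[k-1]}$, which factorises coordinate by coordinate:
\begin{equation*}
\sum_{T_1, T_2 \subset [k-1]} \prod_{j \in T_1 \cup T_2} \frac{1}{(1-\delta)|S_j|}
\;=\; \prod_{j=1}^{k-1}\bigg(1 + \frac{3}{(1-\delta)|S_j|}\bigg),
\end{equation*}
since of the four possibilities for $(j \in T_1, j \in T_2)$, three contribute $1/((1-\delta)|S_j|)$ and one contributes~$1$. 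Dividing by $|S_k|^2$ yields \eqref{eq:M:bounds}.

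The main obstacle is setting up the first step cleanly so that the non-parallel condition enters at exactly the right place, namely as the injectivity of $A \mapsto T_A$, which is what allows a sum depending on $\cA$ to be dominated by a coordinate-wise product. Once that expansion is in place, the measure estimate is a routine consequence of the fact that Definition~\ref{def:Pk} caps the pointwise distortion at $1/(1-\delta)$ on each fibre, and the remaining bookkeeping is elementary.
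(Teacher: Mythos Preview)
Your proof is correct and follows essentially the same route as the paper: the paper proves your auxiliary estimate as a separate lemma (Lemma~\ref{lem:AP}, with $A'$ written as $A^{[k-1]}$), then derives the double sum over $\cA_k \times \cA_k$ exactly as you do (Lemma~\ref{lem:moments}), and finally performs the same factorisation via $\sum_{F_1,F_2 \subset [k-1]} \nu(F_1 \cup F_2) = \sum_{J} 3^{|J|}\nu(J)$. Your use of the non-parallel hypothesis as injectivity of $A \mapsto F(A)\cap[k-1]$ on $\cA_k$ matches the paper's argument verbatim.
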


The first step in the proof of Lemma~\ref{prop:moments} is the following straightforward bound on the $\Pr_k$-measure of a $Q_k$-measurable hyperplane.

\begin{lemma}\label{lem:AP}
Let\/ $A$ be a hyperplane, and let $0 \le k \le n$. If $F(A) \subset [k]$, then
\begin{equation}\label{e:APdist}
  \Pr_k(A) \le \prod_{j \in F(A)} \frac{1}{(1-\delta)|S_j|}. %\nu(F(A)).
\end{equation}
\end{lemma}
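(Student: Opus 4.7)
\medskip

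My plan is to prove Lemma~\ref{lem:AP} by induction on $k$. The base case $k = 0$ is trivial: $F(A) \subset [0] = \emptyset$ forces $A = Q$, so $\Pr_0(A) = 1$ and the empty product on the right-hand side is also~$1$.

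For the inductive step, fix a hyperplane $A$ with $F(A) \subset [k]$, and split into two cases based on whether $k$ is fixed by $A$. If $k \notin F(A)$, then $A$ is $Q_{k-1}$-measurable, so $\Pr_k(A) = \Pr_{k-1}(A)$ by the invariance observation recorded immediately after Definition~\ref{def:Pk}. Since $F(A) \subset [k-1]$, the inductive hypothesis (applied at level $k-1$) immediately gives the desired bound. If instead $k \in F(A)$, then $A$ has the form $A = A' \times \{y_0\}$ for some $y_0 \in S_k$, where $A' \subset Q_{k-1}$ is a hyperplane with $F(A') = F(A) \setminus \{k\}$.

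The key observation in this second case is that for \emph{every} $x \in Q_{k-1}$ (and hence regardless of which branch of Definition~\ref{def:Pk} applies at the pair $(x,y_0)$), we have
$$\Pr_k(x, y_0) \le \frac{1}{(1-\delta)|S_k|} \cdot \Pr_{k-1}(x).$$
Indeed, if $(x,y_0) \in B_k$ then the factor in Definition~\ref{def:Pk} is at most $\frac{\alpha_k(x)-\delta}{\alpha_k(x)(1-\delta)} \le \frac{1}{1-\delta}$, and if $(x,y_0) \notin B_k$ then the capped factor is at most $\frac{1}{1-\delta}$ by construction. Summing this pointwise bound over $x \in A'$ yields
$$\Pr_k(A) = \sum_{x \in A'} \Pr_k(x,y_0) \le \frac{1}{(1-\delta)|S_k|} \sum_{x \in A'} \Pr_{k-1}(x) = \frac{\Pr_{k-1}(A')}{(1-\delta)|S_k|}.$$
Applying the inductive hypothesis to $A'$ (which satisfies $F(A') \subset [k-1]$) bounds $\Pr_{k-1}(A')$ by $\prod_{j \in F(A)\setminus\{k\}} \frac{1}{(1-\delta)|S_j|}$, and multiplying by the $j=k$ factor completes the induction.

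There is no real obstacle here: the whole argument is a clean double induction (over $k$, peeling off one fixed coordinate at a time) and the only nontrivial content is the uniform upper bound $\frac{1}{(1-\delta)|S_k|}\Pr_{k-1}(x)$ on $\Pr_k(x,y_0)$, which drops out of both cases of Definition~\ref{def:Pk}. The reason the lemma was designed this way is precisely so that the two-case definition of $\Pr_k$ admits such a uniform upper bound, even though the measure on covered versus uncovered points is handled by very different formulas.
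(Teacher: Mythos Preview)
Your proof is correct and follows essentially the same approach as the paper: induction on $k$, splitting on whether $k \in F(A)$, using $Q_{k-1}$-measurability in the first case and the uniform $\frac{1}{1-\delta}$ cap in the second. The only cosmetic difference is that the paper records the bound $\Pr_k(X) \le \frac{1}{1-\delta}\Pr_{k-1}(X)$ as a general observation (equation~\eqref{e:dom}) before the proof and then combines it with the uniformity of $\Pr_{k-1}$ on fibres, whereas you derive the pointwise inequality $\Pr_k(x,y_0) \le \frac{1}{(1-\delta)|S_k|}\Pr_{k-1}(x)$ directly from Definition~\ref{def:Pk} inside the argument.
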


In the proof of Lemma~\ref{lem:AP} we will use the following simple properties of the measures $\Pr_k$. Recall from Section~\ref{sec:distortion} that
 \begin{equation}\label{obs:Qmeas}
  \Pr_k(X) = \Pr_{k-1}(X)
 \end{equation}
for any $Q_{k-1}$-measurable set $X$, and observe that
 \begin{equation}\label{e:dom}
 \Pr_k(X) \le \frac{1}{1-\delta} \cdot \Pr_{k-1}(X)
 \end{equation}
for any set $X \subset Q$, by Definition~\ref{def:Pk}. We will find it useful to define
\begin{equation*}\label{def:nu}
\nu(J) := \prod_{j \in J} \frac{1}{(1-\delta)|S_j|}
\end{equation*}
for each $J \subset [n]$ and, given a hyperplane $A = Y_1 \times \cdots \times Y_n$ and a set $U \subset [n]$, to define $A^U := Y^U_1 \times \cdots Y^U_n$ to be the hyperplane with $Y^U_i := Y_i$ if $i \in U$, and $Y^U_i := S_i$ otherwise.
%Note that $(A^U)^W = A^{U \cap W}$ for every $U,W \subset [n]$.
%Finally, Note that $\nu(\emptyset) = 1$. %, and that the right-hand side of~\eqref{eq:M:bounds} is exactly $|S_k|^{-2} \sum_{J \subset [k-1]} 3^{|J|} \nu(J)$.

\begin{proof}[Proof of Lemma~\ref{lem:AP}]
We will prove, by induction on~$k$, that $\Pr_k(A) \le \nu(J)$ for all $0 \le k \le n$, every set $J \subset [k]$, and every hyperplane $A$ with $F(A) = J$. For $k = 0$ this follows because $\nu(\emptyset) = 1$, so let $1 \le k \le n$, and assume that the induction hypothesis holds for~$\Pr_{k-1}$.

Suppose first that $k \not\in F(A)$. Then $A$ is $Q_{k-1}$-measurable and $J \subset [ k-1]$, and it follows by~\eqref{obs:Qmeas} and the induction hypothesis that $\Pr_k(A) = \Pr_{k-1}(A) \le \nu(J)$, as required.

\enlargethispage*{\baselineskip}

On the other hand, if $k \in F(A)$, then it follows from~\eqref{e:dom} that
$$\Pr_k( A ) \le \frac{1}{1-\delta} \cdot \Pr_{k-1}( A ) = \frac{1}{(1-\delta)|S_k|} \cdot \Pr_{k-1}\big( A^{[k-1]} \big),$$
since the probability measure $\Pr_{k-1}$ is extended uniformly on each fibre. Since $F(A^{[k-1]}) = J \setminus \{k\} \subset [k-1]$, it follows from the induction hypothesis that
$$\Pr_{k-1}\big( A^{[k-1]} \big) \le \nu( J \setminus \{k\}).$$
Hence, by the definition of $\nu$, we obtain $\Pr_k( A ) \le \nu(J)$, as claimed.
\end{proof}

Using Lemma~\ref{lem:AP}, we can now prove the following bound on the second moment of $\alpha_k(x)$. %Let us write
%$$\cF_k := \big\{ F(A) : A \in \cA_k \big\}$$
%for the collection of sets of fixed coordinates of hyperplanes in $\cA_k$.

\begin{lemma}\label{lem:moments}
Let\/ $\cA$ be a collection of hyperplanes in $Q = S_1 \times \dots\times S_n$, no two of which are parallel. Then, for each\/ $1 \le k \le n$,$$\E_{k-1}\big[ \alpha_k(x)^2 \big] \le \frac{1}{|S_k|^2} \sum_{F_1, F_2 \subset [k-1]} \nu\big( F_1 \cup F_2 \big).$$
\end{lemma}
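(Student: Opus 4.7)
The plan is to obtain the bound by writing $\alpha_k(x)$ as a sum of indicators over the hyperplanes revealed in round $k$, squaring to expose the pairwise intersections, and then applying Lemma~\ref{lem:AP} to each intersection.

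First I would unpack $\alpha_k(x) \cdot |S_k|$ via a union bound over $\cA_k$. Since each $A \in \cA_k$ has $\max F(A) = k$, the hyperplane $A$ fixes a specific value $y_A \in S_k$ in the last coordinate, so $(x,y) \in A$ iff $x \in A^{[k-1]}$ and $y = y_A$. Therefore
$$\alpha_k(x) \cdot |S_k| \;=\; \big|\{y \in S_k : (x,y) \in B_k\}\big| \;\le\; \sum_{A \in \cA_k} \id\big[x \in A^{[k-1]}\big].$$

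Next, I would square this inequality and take expectation with respect to $\Pr_{k-1}$, which yields
$$|S_k|^2 \, \E_{k-1}\big[\alpha_k(x)^2\big] \;\le\; \sum_{A_1, A_2 \in \cA_k} \Pr_{k-1}\big( A_1^{[k-1]} \cap A_2^{[k-1]} \big).$$
The intersection $A_1^{[k-1]} \cap A_2^{[k-1]}$ is either empty (if the fixed values conflict on some common coordinate), or a hyperplane whose fixed coordinate set equals $(F(A_1) \setminus \{k\}) \cup (F(A_2) \setminus \{k\}) \subset [k-1]$. In either case Lemma~\ref{lem:AP}, applied at level $k-1$, gives the bound $\nu\big((F(A_1) \setminus \{k\}) \cup (F(A_2) \setminus \{k\})\big)$.

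Finally I would invoke the non-parallel hypothesis: if $A, A' \in \cA_k$ are distinct, then $F(A) \ne F(A')$, and since both contain $k$ we get $F(A) \setminus \{k\} \ne F(A') \setminus \{k\}$. Hence the map $A \mapsto F(A) \setminus \{k\}$ is an injection from $\cA_k$ into $2^{[k-1]}$, and summing over ordered pairs of hyperplanes in $\cA_k$ is dominated by summing over ordered pairs $(F_1,F_2)$ of subsets of $[k-1]$, giving
$$|S_k|^2 \, \E_{k-1}\big[\alpha_k(x)^2\big] \;\le\; \sum_{F_1, F_2 \subset [k-1]} \nu(F_1 \cup F_2),$$
as desired. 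The only real subtlety is the bookkeeping check that $F(A) \setminus \{k\}$ genuinely distinguishes the elements of $\cA_k$; everything else is a union bound, a second-moment expansion, and a direct application of Lemma~\ref{lem:AP}.
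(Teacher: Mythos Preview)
Your proof is correct and follows essentially the same route as the paper's: bound $\alpha_k(x)$ by a sum of indicators over $\cA_k$, square and take expectation to reduce to pairwise intersections $A_1^{[k-1]}\cap A_2^{[k-1]}$, apply Lemma~\ref{lem:AP}, and use the non-parallel hypothesis to inject $\cA_k$ into $2^{[k-1]}$ via $A\mapsto F(A)\setminus\{k\}$. The only cosmetic difference is that the paper writes $F(A)\cap[k-1]$ where you write $F(A)\setminus\{k\}$, which coincide since $\max F(A)=k$.
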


\begin{proof}
Recalling the definitions of~$\alpha_k$ and $B_k$, and using the union bound, we obtain%\footnote{Here we write $\id[E]$ for the indicator function of an event $E$, which takes the value $1$ if the event holds, and $0$ otherwise.}
$$\alpha_k(x) \, = \frac{1}{|S_k|} \sum_{y \in S_k} \id\big[ (x,y) \in B_k \big] \le \frac{1}{|S_k|} \sum_{y \in S_k} \sum_{A \in \cA_k} \id\big[ (x,y) \in A \big]$$
for each $x \in Q_{k-1}$, and therefore
$$\alpha_k(x) \le \frac{1}{|S_k|} \sum_{A \in \cA_k} \id\big[ x \in A^{[k-1]} \big],$$
since for each $x \in Q_{k-1}$ and $A \in \cA_k$, there exists $y \in S_k$ with $(x,y) \in A$ if and only if $x \in A^{[k-1]}$, and moreover such a $y$ (if it exists) is unique, since $k \in F(A)$. It follows that
$$\E_{k-1}\big[ \alpha_k(x)^2 \big] \le \frac{1}{|S_k|^2} \sum_{A_1, A_2 \in \cA_k} \Pr_{k-1}\big( A_1^{[k-1]} \cap A_2^{[k-1]} \big).$$

Now, if $A_1^{[k-1]} \cap A_2^{[k-1]}$ is non-empty, then it is a hyperplane whose set of fixed coordinates is $F_1 \cup F_2$, where $F_1 = F(A_1) \cap [k-1]$ and $F_2 = F(A_2) \cap [k - 1]$. Moreover, the sets $F_i$ determine the hyperplanes $A_i\in \cA_k$ uniquely, since no two of the hyperplanes of $\cA$ are parallel. Hence, applying Lemma~\ref{lem:AP} and recalling the definition of $\nu$, it follows that
$$\E_{k-1}\big[ \alpha_k(x)^2 \big] \le \frac{1}{|S_k|^2} \sum_{F_1, F_2 \subset [k-1]} \nu\big( F_1 \cup F_2 \big),$$
as required.
\end{proof}

The claimed bound on $\E_{k-1}\big[\alpha_k(x)^2\big]$ now follows easily.

\begin{proof}[Proof of Lemma~\ref{prop:moments}]
Observe that
$$\sum_{F_1, F_2 \subset [k-1]} \nu\big( F_1 \cup F_2 \big) = \sum_{J \subset [k-1]} \sum_{\substack{F_1, F_2 \subset [k-1] \\ F_1 \cup F_2 = J}} \nu(J) = \sum_{J \subset [k-1]} 3^{|J|} \nu(J).$$
Hence, by Lemma~\ref{lem:moments},  and recalling again the definition of $\nu$, we have
$$\E_{k-1}\big[\alpha_k(x)^2 \big] \le \frac{1}{|S_k|^2} \sum_{J \subset [k-1]} 3^{|J|} \nu(J) \, = \, \frac{1}{|S_k|^2} \prod_{j = 1}^{k-1} \bigg(1 + \frac{3}{(1-\delta)|S_j|} \bigg),$$
as required.
\end{proof}

\section{The proof of Theorem~\ref{thm:GMM}}\label{sec:proofGMM}

Theorem~\ref{thm:GMM} is a straightforward consequence of Lemmas~\ref{thm:general} and~\ref{prop:moments}; we just need to choose $C$ and $\delta$ so that if $F(A) \not\subset \{1,\ldots,C\}$ for every $A \in \cA$, then the bound given by Lemma~\ref{prop:moments} is strong enough to imply that~\eqref{eq:eta} holds.

\begin{proof}[Proof of Theorem~\ref{thm:GMM}]
Let $(S_k)_{k \ge 1}$ be a sequence of sets as in the statement of the theorem, so there exist $N \in \N$ and $0 < \eps \le 1$ such that $|S_k| \ge (3 + \eps)k$ for all $k \ge N$, and moreover $|S_k| \ge 2$ for each $k \in \N$. We will show that if $C = C(N,\eps)$ is sufficiently large, then the conclusion of the theorem holds. Let $\cA$ be a collection of hyperplanes in $Q = S_1 \times \dots \times S_n$, no two of which are parallel, and with $F(A) \not\subset \{1,\ldots,C\}$ for every $A \in \cA$. To prove the theorem it will suffice to show that $\cA$ does not cover $Q$.

Set $\delta := \eps/6 \in (0,1/2]$, and observe that $\alpha_k(x) = 0$ for every $1 \le k \le C$ and $x \in Q_{k-1}$, since $F(A) \not\subset \{1,\ldots,C\}$ for every $A \in \cA$. Moreover, by Lemma~\ref{prop:moments},
$$\E_{k-1}\big[\alpha_k(x)^2\big] \le \frac{1}{|S_k|^2} \prod_{j = 1}^{k-1} \bigg(1 + \frac{3}{(1-\delta)|S_j|} \bigg)$$
for each $C < k \le n$. Now, note that $(1-\delta)|S_j| \ge 1$ for every $j \in \N$, and that if $j \ge N$ then $(1-\delta)|S_j| \ge (1 - \eps/6)(3 + \eps) \cdot j$. Thus
$$\prod_{j = 1}^{k-1} \bigg(1 + \frac{3}{(1-\delta)|S_j|} \bigg) \le 4^N \exp\bigg( \frac{3}{(1 - \eps/6)(3 + \eps)} \sum_{j = N}^{k-1} \frac{1}{j} \bigg) \le 4^N  \cdot k^{1-\eps/10},$$
where the final inequality holds since $\sum_{j = N}^{k-1} 1/j \le \log k$ and $(1-\eps/6)(3 + \eps)(1 - \eps/10) \ge 3$.

It follows that
$$\E_{k-1}\big[\alpha_k(x)^2\big] \le \, \frac{4^N}{|S_k|^2} \cdot k^{1-\eps/10} \le \, \frac{4^{N}}{9\cdot k^{1+\eps/10}}$$
for every $C < k \le n$ (as long as we chose $C \ge N$, so that $|S_k| \ge 3k$), and hence
$$\frac{1}{4\delta(1-\delta)} \sum_{k=1}^n \E_{k-1}\big[\alpha_k(x)^2\big] \le \, \frac{4^N}{\eps} \sum_{k = C}^n \frac{1}{k^{1+\eps/10}} < \, 1$$
if $C = C(N,\eps)$ is sufficiently large. By Lemma~\ref{thm:general} it follows that $\cA$ does not cover $Q$, as required.
\end{proof}

\addtolength{\footskip}{-\baselineskip/2}

\begin{remark}
When $|S_k| = p_k$, the $k$th prime, for each $k \in \N$, we can choose $\eps = 1$ and $N = 31$, in which case the final inequality in the proof above holds as long as $C \ge 10^{200}$. By the proof of Corollary~\ref{cor:MM}, this gives a (fairly terrible) bound of roughly $\exp( 10^{200} )$ for the minimum modulus in a covering system with distinct square-free moduli. However, it is clear that one could do rather better with a little more effort, and in~\cite{BBMST1} we used a variant of the proof above to reduce the bound in Hough's theorem to less than $10^6$.
\end{remark}

\pagebreak

\section*{Acknowledgement}

The authors would like to thank Noga Alon for an interesting conversation that motivated us to write this expository note.

\end{document}